\newtheorem{theorem}{Theorem}[section]
\newtheorem{lemma}[theorem]{Lemma}
\newtheorem{proposition}[theorem]{Proposition}
\theoremstyle{definition}
\theoremstyle{remark}
\numberwithin{equation}{section}
\begin{document}

\setcounter{page}{1}

\title[A note on stability of Hardy inequalities]{A note on stability of Hardy inequalities}

\author[Michael Ruzhansky \MakeLowercase{and} Durvudkhan Suragan]{Michael Ruzhansky,$^1$  \MakeLowercase{and} Durvudkhan Suragan$^2$$^{*}$}

\address{$^{1}$	Department of Mathematics,
		Imperial College London,
	180 Queen's Gate, London SW7 2AZ,
	United Kingdom}
\email{\textcolor[rgb]{0.00,0.00,0.84}{m.ruzhansky@imperial.ac.uk}}

\address{$^{2}$  Institute of Mathematics and Mathematical Modelling,
	125 Pushkin str.,
	050010 Almaty,
	Kazakhstan.}
\email{\textcolor[rgb]{0.00,0.00,0.84}{suragan@math.kz}}


\let\thefootnote\relax\footnote{Copyright 2016 by the Tusi Mathematical Research Group.}

\subjclass[2010]{Primary 22E30; Secondary 43A80.}

\keywords{Hardy inequality, Rellich inequality, stability, remainder term, homogeneous Lie group.}

\date{Received: xxxxxx; Revised: yyyyyy; Accepted: zzzzzz.
\newline \indent $^{*}$Corresponding author}

\begin{abstract}
In this note we formulate recent stability results for Hardy inequalities in the language of Folland and Stein's homogeneous groups. Consequently, we obtain remainder estimates for Rellich type inequalities on homogeneous groups. Main differences from the Euclidean results are that the obtained stability estimates hold for any homogeneous quasi-norm.
\end{abstract} \maketitle

\section{Introduction}
\label{SEC:intro}

Recall the  $L^{p}$-Hardy inequality
\begin{equation}\label{Lp_Hardy}
\int_{\mathbb{R}^{n}}|\nabla f|^{p}dx\geq\left(\frac{n-p }{p}\right)^{p}\int_{\mathbb{R}^{n}}\frac{|f|^{p}}{|x|^{p}}dx
\end{equation}
for every function $f\in C_{0}^{\infty}(\mathbb{R}^{n})$, where $2\leq p<n$. 

Cianchi and Ferone \cite {CF08} showed that for all $1<p<n$ there exists a constant $C=C(p,n)$ such that
$$\int_{\mathbb{R}^{n}}|\nabla f|^{p}dx\geq\left(\frac{n-p}{p}\right)^{p}\int_{\mathbb{R}^{n}}\frac{|f|^{p}}{|x|^{p}}dx\,(1+Cd_{p}(f)^{2p^{*}})$$
holds for all real-valued weakly differentiable functions $f$ in $\mathbb{R}^{n}$ such that $f$ and $|\nabla f|\in L^{p}(\mathbb{R}^{n})$ go to zero at infinity. Here
$$d_{p}f=\underset{c\in \mathbb{R}}{\rm inf}\frac{\|f-c|x|^{-\frac{n-p}{p}}\|_{L^{p^{*},\infty}(\mathbb{R}^{n})}}{\|f\|_{L^{p^{*},p}(\mathbb{R}^{n})}}$$ with $p^{*}=\frac{np}{n-p}$, and $L^{\tau, \sigma}(\mathbb{R}^{n})$ is the Lorentz space for $0<\tau\leq \infty$ and $1\leq\sigma\leq\infty$. Sometimes the improved versions of different inequalities, or remainder estimates, are called stability of the inequality if the estimates depend on certain distances: see, e.g. \cite{BJOS16} for stability of trace theorems, \cite{CFW13} for stability of Sobolev inequalities, etc. For more general Lie group discussions of above inequalities we refer to recent papers \cite{Ruzhansky-Suragan:Layers}, \cite{RS-identities} and \cite{Ruzhansky-Suragan:squares} as well as references therein.

Recently Sano and Takahashi obtained the improved versions of Hardy inequalities in their works \cite{S17}, \cite{ST15a}, \cite{ST15b} and \cite{ST15}. The aim of this note is to formulate their results one of the largest classes of nilpotent Lie groups on $\mathbb{R}^{n}$, namely, homogeneous Lie groups since obtained results give new insights even for the Abelian groups in term of arbitrariness of homogeneous quasi-norm.

\section{Preliminaries}
\label{SEC:prelim}

First let us shortly review some main concepts of homogeneous
groups following Folland and Stein \cite{FS-Hardy} (see also recent books \cite{BLU07} and \cite{FR} on this topic). We also recall a few other facts that will be used in the proofs.
A connected simply connected Lie group $\mathbb G$ is called a {\em homogeneous group} if
its Lie algebra $\mathfrak{g}$ is equipped with a family of the following dilations:
$$D_{\lambda}={\rm Exp}(A \,{\rm ln}\lambda)=\sum_{k=0}^{\infty}
\frac{1}{k!}({\rm ln}(\lambda) A)^{k},$$
where $A$ is a diagonalisable positive linear operator on $\mathfrak{g}$,
and every $D_{\lambda}$ is a morphism of $\mathfrak{g}$,
that is,
$$\forall X,Y\in \mathfrak{g},\, \lambda>0,\;
[D_{\lambda}X, D_{\lambda}Y]=D_{\lambda}[X,Y],$$
holds. We recall that $Q := {\rm Tr}\,A$ is called the homogeneous dimension of $\mathbb G$. The Haar measure on a homogeneous group $\mathbb{G}$ is the standard Lebesgue measure for $\mathbb{R}^{n}$ (see, for example \cite[Proposition 1.6.6]{FR}).

Let $|\cdot|$ be a homogeneous quasi-norm on $\mathbb G$.
Then the quasi-ball centred at $x\in\mathbb{G}$ with radius $R > 0$ is defined by
$$B(x,R):=\{y\in \mathbb{G}: |x^{-1}y|<R\}.$$
We refer to \cite{FS-Hardy} for the proof of the following important polar decomposition on homogeneous Lie groups, which can be also found in \cite[Section 3.1.7]{FR}:
there is a (unique)
positive Borel measure $\sigma$ on the
unit quasi-sphere
\begin{equation}\label{EQ:sphere}
\wp:=\{x\in \mathbb{G}:\,|x|=1\},
\end{equation}
so that for every $f\in L^{1}(\mathbb{G})$ we have
\begin{equation}\label{EQ:polar}
\int_{\mathbb{G}}f(x)dx=\int_{0}^{\infty}
\int_{\wp}f(ry)r^{Q-1}d\sigma(y)dr.
\end{equation}

We use the notation
\begin{equation}\label{dfdr}
\mathcal{R} f(x):= 	\mathcal{R}_{|x|} f(x) = \frac{d}{d|x|}f(x)=\mathcal{R}f(x), \;\forall x\in \mathbb G,
\end{equation}
for any homogeneous quasi-norm $|x|$ on $\mathbb G$.

We will also use the following result:

\begin{lemma}[\cite{Ruzhansky-Suragan:critical}]
	\label{CKN}
	Let $\mathbb{G}$ be a homogeneous group
	of homogeneous dimension $Q$. Let $|\cdot|$ be any homogeneous norm on  $\mathbb{G}$.  Then for $u\in C_{0}^{\infty}(\mathbb{G}\backslash\{0\})$ and $u_{R}=u\left(R\frac{x}{|x|}\right)$ we have
	\begin{equation}\label{ScalHardy4}
	\left\|\frac{u-u_{R}}{|x|^{\frac{Q}{p}}\log\frac{R}{|x|}}\right\|_{L^{p}(\mathbb{G})}
	\leq\frac{p}{p-1}\left\||x|^{\frac{p-Q}{p}}
	\mathcal{R}u \right\|_{L^{p}(\mathbb{G})}, \;\;1<p<\infty,
	\end{equation}
	for all $R>0$, and the constant $\frac{p}{p-1}$ is sharp.
\end{lemma}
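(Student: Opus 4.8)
The plan is to prove the inequality \eqref{ScalHardy4} by reducing it, via the polar decomposition \eqref{EQ:polar}, to a one-dimensional Hardy-type inequality on the half-line $(0,\infty)$ in the radial variable $r=|x|$, with the sharp constant coming from the classical weighted Hardy inequality. First I would write, for fixed $y\in\wp$, the function $g(r):=u(ry)$, so that $u-u_R$ along the ray through $y$ becomes $g(r)-g(R)$ and $\mathcal{R}u(ry)=g'(r)$. Using \eqref{EQ:polar} the left-hand side of \eqref{ScalHardy4} becomes
\begin{equation*}
\int_{\wp}\int_{0}^{\infty}\frac{|g(r)-g(R)|^{p}}{r^{Q}\bigl|\log\frac{R}{r}\bigr|^{p}}\,r^{Q-1}\,dr\,d\sigma(y)
=\int_{\wp}\int_{0}^{\infty}\frac{|g(r)-g(R)|^{p}}{r\,\bigl|\log\frac{R}{r}\bigr|^{p}}\,dr\,d\sigma(y),
\end{equation*}
while the right-hand side involves $\int_{\wp}\int_{0}^{\infty}r^{p-Q}|g'(r)|^{p}r^{Q-1}\,dr\,d\sigma(y)=\int_{\wp}\int_{0}^{\infty}r^{p-1}|g'(r)|^{p}\,dr\,d\sigma(y)$. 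Hence it suffices to prove, for each fixed $y$ (hence each $g\in C_0^\infty((0,\infty))$ after noting $u\in C_0^\infty(\mathbb G\setminus\{0\})$), the one-dimensional estimate
\begin{equation}\label{PLAN:1d}
\int_{0}^{\infty}\frac{|g(r)-g(R)|^{p}}{r\,\bigl|\log\frac{R}{r}\bigr|^{p}}\,dr\;\leq\;\left(\frac{p}{p-1}\right)^{p}\int_{0}^{\infty}r^{p-1}|g'(r)|^{p}\,dr,
\end{equation}
and then integrate over $\wp$ against $d\sigma$.

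Next I would establish \eqref{PLAN:1d} by the substitution $r=Re^{-t}$, $t\in(-\infty,\infty)$: set $h(t):=g(Re^{-t})-g(R)$, so $h(0)=0$, $dr/r=-dt$, $\log(R/r)=t$, and $r\,g'(r)=-h'(t)$. The left-hand side becomes $\int_{-\infty}^{\infty}|h(t)|^{p}|t|^{-p}\,dt$ and the right-hand side becomes $\left(\frac{p}{p-1}\right)^{p}\int_{-\infty}^{\infty}|h'(t)|^{p}\,dt$, because $r^{p-1}|g'(r)|^{p}\,dr=|r g'(r)|^{p}\,dr/r=|h'(t)|^{p}\,dt$. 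So \eqref{PLAN:1d} is equivalent to the one-dimensional Hardy inequality with the boundary point at $t=0$:
\begin{equation*}
\int_{-\infty}^{\infty}\frac{|h(t)|^{p}}{|t|^{p}}\,dt\;\leq\;\left(\frac{p}{p-1}\right)^{p}\int_{-\infty}^{\infty}|h'(t)|^{p}\,dt,\qquad h(0)=0,
\end{equation*}
which splits into the two half-line Hardy inequalities on $(0,\infty)$ and $(-\infty,0)$; each holds with sharp constant $\bigl(\tfrac{p}{p-1}\bigr)^{p}=\bigl(\tfrac{p}{p-1}\bigr)^{p}$ by the classical Hardy inequality $\int_0^\infty|h(t)|^p t^{-p}\,dt\le (p/(p-1))^p\int_0^\infty|h'(t)|^p\,dt$ for $h(0)=0$, $1<p<\infty$.

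For the sharpness of the constant $\frac{p}{p-1}$ in \eqref{ScalHardy4} I would exhibit a sequence of test functions that saturates \eqref{PLAN:1d}, chosen to depend only on $r=|x|$ (so that the factor $\int_\wp d\sigma(y)$ cancels between the two sides), built from the known extremising sequence for the one-dimensional Hardy inequality transplanted back through $t\mapsto r=Re^{-t}$ and truncated/regularised to lie in $C_0^\infty(\mathbb G\setminus\{0\})$. The main obstacle I anticipate is the bookkeeping at the two ends: ensuring that for $u\in C_0^\infty(\mathbb G\setminus\{0\})$ the function $h$ is admissible for the half-line Hardy inequalities on \emph{both} $(0,\infty)$ and $(-\infty,0)$ — in particular that $h(t)\to 0$ suitably as $t\to\pm\infty$ (which follows from $u$ having compact support away from $0$, so $g$ vanishes near $0$ and near $\infty$) and that the singularity of $|t|^{-p}$ at $t=0$ is harmless because $h(0)=0$ with $h$ Lipschitz there. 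A secondary technical point is justifying the use of \eqref{EQ:polar} with the integrand (which is nonnegative, so Tonelli applies) and the differentiation under the quasi-norm in \eqref{dfdr}; both are routine given the stated polar decomposition.
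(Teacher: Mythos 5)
The paper itself does not prove Lemma \ref{CKN}; it imports it from \cite{Ruzhansky-Suragan:critical}, and your argument is essentially the standard proof given there: polar decomposition, reduction to a one-dimensional inequality in the radial variable, the logarithmic substitution $r=Re^{-t}$, and the classical half-line Hardy inequality with the sharp constant $\bigl(\tfrac{p}{p-1}\bigr)^{p}$. All the weight bookkeeping in your reduction checks out. One small correction: your anticipated ``obstacle'' at the ends is misdiagnosed --- $h(t)=g(Re^{-t})-g(R)$ does \emph{not} tend to $0$ as $t\to\pm\infty$ (it tends to $-g(R)$, which is generally nonzero); but this is harmless, since the half-line Hardy inequality you invoke needs only $h(0)=0$ and absolute continuity (via $|h(t)|\le\int_{0}^{|t|}|h'|$ followed by Hardy's integral inequality), with no decay hypothesis at infinity, and the left-hand side stays finite because $|t|^{-p}$ is integrable at infinity for $p>1$.
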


In the abelian isotropic case, the following result was obtained in \cite{MOW15}. In the case $\gamma=p$ this result on the homogeneous group was proved in \cite{Ruzhansky-Suragan:critical}.

We will also use the following known relations  
\begin{lemma}\label{ab_relation}
	Let $a,b\in \mathbb{R}$. Then
	\begin{itemize}
		\item[i.] 	$$|a-b|^{p}-|a|^{p}\geq-p|a|^{p-2}ab,\quad p\geq 1.$$ 
		\item[ii.] 	There exists a constant $C=C(p)>0$ such that 	
		$$|a-b|^{p}-|a|^{p}\geq-p|a|^{p-2}ab+C|b|^{p},\quad p\geq 2.$$
		\item[iii.] If $a\geq 0$ and $a-b\geq 0$. Then 
		$$(a-b)^{p}+pa^{p-1}b-a^{p}\geq |b|^{p},\quad p\geq 2.$$
	\end{itemize}
\end{lemma}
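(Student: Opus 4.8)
The three inequalities are elementary pointwise estimates, and the plan is to reduce each to a one-variable statement and then invoke convexity or the fundamental theorem of calculus. For (i) I would use that $\phi(t):=|t|^{p}$ is convex on $\mathbb{R}$ when $p\geq1$, with $\phi'(t)=p|t|^{p-2}t$; the supporting-line (subdifferential) inequality $\phi(y)-\phi(x)\geq\phi'(x)(y-x)$ applied with $x=a$ and $y=a-b$ gives precisely $|a-b|^{p}-|a|^{p}\geq-p|a|^{p-2}ab$. (For $p=1$ one instead checks the cases $a>0$, $a<0$, $a=0$ directly.)

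For (iii), assuming $a\geq0$ and $a-b\geq0$, the key point is the identity
\begin{equation*}
(a-b)^{p}+pa^{p-1}b-a^{p}=p\int_{0}^{b}\bigl(a^{p-1}-(a-s)^{p-1}\bigr)\,ds,
\end{equation*}
which one verifies by differentiating both sides in $b$ and noting equality at $b=0$. If $0\leq b\leq a$, then $a-s\geq0$ for all $s\in[0,b]$, so writing $a=s+(a-s)$ and using the superadditivity bound $(x+y)^{p-1}\geq x^{p-1}+y^{p-1}$ for $x,y\geq0$ (valid since $p-1\geq1$) gives $a^{p-1}-(a-s)^{p-1}\geq s^{p-1}$, whence the integral is $\geq p\int_{0}^{b}s^{p-1}\,ds=b^{p}=|b|^{p}$. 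If $b<0$, the same identity reads $p\int_{b}^{0}\bigl((a-s)^{p-1}-a^{p-1}\bigr)\,ds$ with $a-s\geq a\geq0$, and superadditivity applied to $a-s=a+(-s)$ gives $(a-s)^{p-1}-a^{p-1}\geq(-s)^{p-1}$, so the integral is $\geq p\int_{b}^{0}(-s)^{p-1}\,ds=(-b)^{p}=|b|^{p}$. Either way the claim follows.

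For (ii), replacing $(a,b)$ by $(-a,-b)$ we may assume $a\geq0$; dividing by $|b|^{p}$ (the case $b=0$ being trivial) and setting $t:=a/|b|\geq0$, the inequality becomes $g_{\varepsilon}(t):=|t-\varepsilon|^{p}-t^{p}+\varepsilon p\,t^{p-1}\geq C$ for $t\geq0$, where $\varepsilon:={\rm sign}(b)\in\{1,-1\}$. Each $g_{\varepsilon}$ is continuous on $[0,\infty)$ and nonnegative there by (i); it is in fact strictly positive, since equality in the subdifferential inequality behind (i) would force $a-b=a$, i.e.\ $b=0$, which is excluded; and expanding $(t\mp1)^{p}$ for large $t$ shows $g_{\varepsilon}(t)\to+\infty$ as $t\to\infty$ when $p>2$ (while $g_{\varepsilon}\equiv1$ when $p=2$). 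Hence $C:=\inf_{t\geq0}\min\{g_{1}(t),g_{-1}(t)\}$ is a positive constant depending only on $p$. (Alternatively, the sub-case $b\leq a$ is already covered by (iii) with $C=1$, leaving only $0\leq a<b$, handled as above.) The only genuinely nontrivial point in the lemma is this uniformity in (ii) — one must check that the reduced one-variable function stays strictly positive and grows at infinity, so that its infimum is a constant independent of $a$ and $b$; the rest is routine, modulo minor care with the cases hidden in $|a|^{p-2}a$ as $a$ changes sign.
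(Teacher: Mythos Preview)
Your argument is correct in all three parts. The paper itself does not prove this lemma: it is introduced only as ``the following known relations'' and is used as a black box in the subsequent sections, so there is no proof in the paper to compare against. Your approach --- the supporting-line inequality for the convex function $|t|^{p}$ in (i), the integral identity combined with superadditivity of $t\mapsto t^{p-1}$ in (iii), and the scaling reduction to a one-variable positivity/compactness argument in (ii) --- is the standard way these estimates are established, and nothing is missing.
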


\section{Stability of $L^{p}$-Hardy inequalities}

Let us set
\begin{equation*}
d_H (u;R) := \left(\int_{\mathbb{G}} \frac{\left|u(x)-R^{\frac{Q-p}{p}}u\left(R\frac{x}{|x|}\right)|x|^{-\frac{Q-p}{p}}\right|^p}{|x|^p|\log\frac{R}{|x|}|^p} dx \right)^{\frac{1}{p}},\; x\in\mathbb{G},\; R>0.
\end{equation*}

\begin{theorem}\label{stab_H}
	Let $\mathbb{G}$ be a homogeneous group
	of homogeneous dimension $Q$. Let $|\cdot|$ be any homogeneous quasi-norm on $\mathbb{G}$. Then there exists a constant $C>0$  for all real-valued functions $u\in C_{0}^{\infty}(\mathbb{G})$ we have 
	\begin{equation}\label{stab_H_eq}
	\int_{\mathbb{G}} \left|\mathcal{R} u \right|^p dx - \left(\frac{Q-p}{p}\right)^p \int_{\mathbb{G}} \frac{|u|^p}{|x|^p} dx \geq C \sup_{R>0} d^p_H(u;R),\; 2 \leq p< Q,
	\end{equation}
	where $\mathcal{R}:=\frac{d}{d|x|}$ is the radial derivative.
\end{theorem}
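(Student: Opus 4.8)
The plan is to prove the stronger pointwise-in-$R$ estimate: for every $R>0$,
$$\int_{\mathbb G}|\mathcal R u|^p\,dx-\Big(\tfrac{Q-p}{p}\Big)^p\int_{\mathbb G}\frac{|u|^p}{|x|^p}\,dx\ \ge\ C\,d_H^p(u;R),$$
with $C$ independent of $R$ and $u$, and then to take the supremum over $R>0$ (the left-hand side being $R$-free). Throughout I write $\beta:=\frac{Q-p}{p}>0$, so that $(\frac{Q-p}{p})^p=\beta^p$ and $\mathcal R=\frac{d}{d|x|}$. The argument splits into an \emph{extraction step}, which produces the $R$-free remainder $\int_{\mathbb G}|\mathcal R u+\beta\frac{u}{|x|}|^p\,dx$, and an \emph{identification step}, which recognizes that remainder through Lemma \ref{CKN}.

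For the extraction step I would apply Lemma \ref{ab_relation}(ii) pointwise with $a=-\beta\frac{u}{|x|}$ and $b=-\big(\mathcal R u+\beta\frac{u}{|x|}\big)$, so that $a-b=\mathcal R u$; this is legitimate since $u$ is real-valued and $p\ge2$. Expanding $|a|^{p-2}ab$ gives, for a.e. $x$,
$$|\mathcal R u|^p\ \ge\ \beta^p\frac{|u|^p}{|x|^p}-p\,\beta^{p-1}\frac{|u|^{p-2}u\,\mathcal R u}{|x|^{p-1}}-p\,\beta^p\frac{|u|^p}{|x|^p}+C\Big|\mathcal R u+\beta\tfrac{u}{|x|}\Big|^p .$$
Integrating over $\mathbb G$ and treating the mixed term by the polar decomposition \eqref{EQ:polar} followed by integration by parts in $r=|x|$ — the boundary contribution at $r=0$ vanishing because $Q>p$ and $u$ is bounded, and at $r=\infty$ by compact support — one obtains the identity $\int_{\mathbb G}\frac{|u|^{p-2}u\,\mathcal R u}{|x|^{p-1}}\,dx=-\beta\int_{\mathbb G}\frac{|u|^p}{|x|^p}\,dx$. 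Substituting it back, the three multiples of $\int_{\mathbb G}\frac{|u|^p}{|x|^p}\,dx$ collapse to a single $\beta^p$, which yields
$$\int_{\mathbb G}|\mathcal R u|^p\,dx-\Big(\tfrac{Q-p}{p}\Big)^p\int_{\mathbb G}\frac{|u|^p}{|x|^p}\,dx\ \ge\ C\int_{\mathbb G}\Big|\mathcal R u+\beta\tfrac{u}{|x|}\Big|^p\,dx .$$

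For the identification step I would set $w:=|x|^{\beta}u$; from $\mathcal R(|x|^\beta u)=|x|^\beta\big(\mathcal R u+\beta\frac{u}{|x|}\big)$ one gets $\int_{\mathbb G}\big|\mathcal R u+\beta\frac{u}{|x|}\big|^p\,dx=\big\||x|^{\frac{p-Q}{p}}\mathcal R w\big\|_{L^p(\mathbb G)}^p$. Lemma \ref{CKN}, applied to $w$ and raised to the power $p$, then bounds this from below by $\big(\frac{p-1}{p}\big)^p\big\|\frac{w-w_R}{|x|^{Q/p}\log\frac{R}{|x|}}\big\|_{L^p(\mathbb G)}^p$. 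Noting that $|R\frac{x}{|x|}|=R$ gives $w_R=R^{\beta}u(R\frac{x}{|x|})$, and the arithmetic identities $\beta-\frac{Q}{p}=-1$, $-\beta-1=-\frac{Q}{p}$ give
$$\frac{w-w_R}{|x|^{Q/p}\log\frac{R}{|x|}}\ =\ \frac{u(x)-R^{\frac{Q-p}{p}}u\!\left(R\frac{x}{|x|}\right)|x|^{-\frac{Q-p}{p}}}{|x|\log\frac{R}{|x|}},$$
so the above lower bound equals $\big(\frac{p-1}{p}\big)^p d_H^p(u;R)$. Chaining with the extraction step gives the pointwise-in-$R$ estimate with constant $C\big(\frac{p-1}{p}\big)^p$, and taking $\sup_{R>0}$ gives \eqref{stab_H_eq}.

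The one genuine subtlety — the step I expect to require the most care — is that Lemma \ref{CKN} is stated for $w\in C_0^\infty(\mathbb G\setminus\{0\})$, whereas for $u\in C_0^\infty(\mathbb G)$ with $u(0)\ne0$ the function $w=|x|^\beta u$ is merely continuous and compactly supported, not smooth at the origin. I would resolve this by carrying out the whole argument first for $u\in C_0^\infty(\mathbb G\setminus\{0\})$ (for which $w$ is admissible), and then passing to general $u\in C_0^\infty(\mathbb G)$ through a radial cutoff $u_\varepsilon(x)=\chi(|x|/\varepsilon)u(x)$ with $\chi$ vanishing near $0$ and equal to $1$ away from it: the correction in $\mathcal R u_\varepsilon$ is supported where $|x|\sim\varepsilon$ and its $L^p$-contribution is $O(\varepsilon^{Q-p})\to0$ since $Q>p$, while for each fixed $R$ dominated convergence yields $d_H(u_\varepsilon;R)\to d_H(u;R)$ (the integrand of $d_H$ being integrable near $0$ precisely because $p>1$). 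Applying the already-established inequality to $u_\varepsilon$ for a fixed $R$, letting $\varepsilon\to0$, and finally taking $\sup_{R>0}$ completes the proof.
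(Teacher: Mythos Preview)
Your proof is correct and follows essentially the same route as the paper's: the substitution $w=|x|^{\beta}u$ is exactly the paper's $v(ry)=r^{(Q-p)/p}u(ry)$, your choices of $a,b$ in Lemma~\ref{ab_relation}(ii) agree with the paper's up to a common sign, and both conclude via Lemma~\ref{CKN}. The only substantive difference is that you additionally justify the applicability of Lemma~\ref{CKN} to $w$ (which need not lie in $C_0^\infty(\mathbb G\setminus\{0\})$) via a cutoff argument, a point the paper passes over silently.
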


\begin{proof}[Proof of Theorem \ref{stab_H}]
	
	Let us introduce polar coordinates $x=(r,y)=(|x|, \frac{x}{\mid x\mid})\in (0,\infty)\times\wp$ 
	on $\mathbb{G}$, where $\wp$ is the
	unit quasi-sphere
	\begin{equation}
	\wp:=\{x\in \mathbb{G}:\,|x|=1\},
	\end{equation} and 
	
	\begin{equation}\label{trasn}
	v(ry):= r^{\frac{Q-p}{p}}u(ry), 
	\end{equation} 
	where $u\in C_{0}^{\infty}(\mathbb{G})$.	
	This follows that $v(0)=0$ and $\underset{r\rightarrow  \infty }{\lim}v(ry)=0$ for $y \in  \wp$ since $u$ is compactly supported. Using the polar decomposition on homogeneous groups (see \eqref{EQ:polar}) and integrating by parts, we get
	\begin{align*}
	D:&= \int_{\mathbb{G}} \left|\mathcal{R} u \right|^p dx - \left(\frac{Q-p}{p}\right)^p \int_{\mathbb{G}} \frac{|u|^p}{|x|^p} dx \\
	& = \int_{ \wp}\int_{0}^{\infty} \left| -\frac{\partial }{\partial r} u(ry)\right|^p r^{Q-1} - \left(\frac{Q-p}{p}\right)^p |u(ry)|^p r^{Q-p-1} drdy \\
	& = \int_{ \wp}\int_{0}^{\infty} \left| \frac{Q-p}{p} r^{-\frac{Q}{p}} v(ry) - r^{-\frac{Q-p}{p}} \frac{\partial}{\partial r} v(ry)\right|^p r^{Q-1} \\ & - \left(\frac{Q-p}{p}\right)^p |v(ry)|^p r^{-1} drdy.
	\end{align*}
	Now using the second relation in Lemma \ref{ab_relation}  with the choice $a = \frac{Q-p}{p}r^{-\frac{Q}{p}}v(ry)$ and $b = r^{-\frac{Q-p}{p}}\frac{\partial }{\partial r}v(ry)$, and using the fact $\int_{0}^{\infty}|v|^{p-2}v\left(\frac{\partial}{\partial r}v\right)dr =0$, we obtain
	\begin{align}\label{1_3.2} 
	D &\geq \int_{ \wp}\int_{0}^{\infty}  -p \left(\frac{Q-p}{p} \right)^{p-1} |v(ry)|^{p-2} v(ry) \frac{\partial }{\partial r} v(ry) 
	\\ & +C\left|\frac{\partial}{\partial r}v(ry)\right|^p r^{p-1} drdy \\
	& = C \int_{\mathbb{G}} |x|^{p-Q} \left| \mathcal{R} v \right|^p dx. \nonumber
	\end{align} 
	Finally, combining \eqref{1_3.2} and Lemma \ref{CKN}, we arrive at  
	\begin{align}
	D &\geq C \int_{\mathbb{G}} \frac{|v(x) -v(R\frac{x}{|x|})|^p}{|x|^Q |\log \frac{R}{|x|}|^p} dx = C \int_{ \wp}\int_{0}^{\infty}  \frac{|v(ry) -v(Ry)|^p}{r\left|\log \frac{R}{r}\right|^p} drdy \\
	& =C\int_{ \wp}\int_{0}^{\infty} \frac{|u(ry)-R^{\frac{Q-p}{p}}u(Ry)r^{-\frac{Q-p}{p}}|^p}{r^{1+p-Q}|\log \frac{R}{r}|^p} drdy \nonumber
	\end{align}
	for any $R >0$. This proves the desired result.
\end{proof}
\section{Stability of critical Hardy inequalities}
In this section we establish a stability estimate for the critical Hardy inequality involving the distance to the set of extremisers:
Let us denote
\begin{equation}\label{aremterm7}
f_{T,R}(x)=T^{\frac{Q-1}{Q}}u\left(Re^{-\frac{1}{T}}\frac{x}{|x|}\right)\left(\log \frac{R}{|x|}\right)^{\frac{Q-1}{Q}}
\end{equation}
and the following 'distance'
\begin{equation}\label{aremterm8}
d_{cH}(u;T, R):= \left( \int_{B(0,R)} \frac{|u(x)-f_{T,R}(x)|^Q}{|x|^Q\left|\log \frac{R}{|x|}\right|^Q\left|T\log \frac{R}{|x|}\right|^Q} dx\right)^{\frac{1}{Q}},
\end{equation}
for some parameter $T>0$, functions $u$ and $f_{T,R}$ for which the integral in \eqref{aremterm8} is finite.
\begin{theorem}\label{stab_CH}
	Let $\mathbb{G}$ be a homogeneous group
	of homogeneous dimension $Q\geq2$. Let $|\cdot|$ be any homogeneous quasi-norm on $\mathbb{G}$. Then there exists a constant $C>0$ for all real-valued functions $u\in C_{0}^{\infty}(B(0,R))$ we have 
	\begin{equation}\label{aremterm9}
	\int_{B(0,R)} \left| \mathcal{R} u(x) \right|^Q dx - \left(\frac{Q-1}{Q}\right)^{Q} \int_{B(0,R)} \frac{|u(x)|^Q}{|x|^Q (\log \frac{R}{|x|})^Q}dx \geq C \sup_{T>0} d^Q_{cH} (u;T, R)
	\end{equation}
	where $\mathcal{R}:=\frac{d}{d|x|}$ is the radial derivative.
\end{theorem}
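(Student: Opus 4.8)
The plan is to mimic the proof of Theorem~\ref{stab_H} almost verbatim, replacing the power weight structure by the logarithmic one that is natural for critical Hardy inequalities. First I would introduce polar coordinates $x=(r,y)\in(0,R)\times\wp$ and, in place of \eqref{trasn}, make the substitution
\begin{equation*}
v(ry):=\left(\log\frac{R}{r}\right)^{-\frac{Q-1}{Q}}u(ry),
\end{equation*}
which is the change of variables that trivialises the critical Hardy inequality. The point is that with this $v$ one has $v(ry)\to 0$ as $r\to R^-$ (because $u$ is supported in $B(0,R)$, hence away from the boundary sphere) and $v(ry)\to 0$ as $r\to 0^+$, so the boundary terms in the forthcoming integration by parts vanish. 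After computing $\mathcal{R}u=\frac{\partial}{\partial r}u$ in terms of $v$ and plugging into $D:=\int_{B(0,R)}|\mathcal{R}u|^Q dx-\left(\frac{Q-1}{Q}\right)^Q\int_{B(0,R)}\frac{|u|^Q}{|x|^Q(\log\frac{R}{|x|})^Q}dx$, the integrand should take the form $\left|a-b\right|^Q r^{Q-1}-|a|^Q r^{Q-1}$ with $a$ the term coming from differentiating the logarithmic factor and $b$ the term $\propto \frac{\partial}{\partial r}v$ (with the appropriate $r$- and $\log$-weights).

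Next I would apply part~(ii) of Lemma~\ref{ab_relation} (valid since $p=Q\geq 2$) with exactly this choice of $a$ and $b$, exploiting that $\int |v|^{Q-2}v\,\frac{\partial}{\partial r}v\,dr=\frac{1}{Q}\int\frac{\partial}{\partial r}|v|^Q\,dr=0$ by the vanishing of the boundary values of $v$. This kills the linear term $-Q|a|^{Q-2}ab$ after integration and leaves
\begin{equation*}
D\geq C\int_{B(0,R)}\frac{1}{|x|^Q\left|\log\frac{R}{|x|}\right|^{Q}}\,\bigl|\text{(weighted)}\,\mathcal{R}v\bigr|^{Q}\,dx,
\end{equation*}
i.e. a genuine positive remainder expressed through the radial derivative of $v$. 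The only care needed here is to track the powers of $r$ and of $\log\frac{R}{r}$ produced by the substitution so that the $r^{Q-1}\,dr$ Haar factor is absorbed correctly.

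To convert this remainder into the claimed distance $d_{cH}(u;T,R)$ I would invoke Lemma~\ref{CKN}. The subtlety is that Lemma~\ref{CKN} as stated controls $v-v_R$ with $v_R=v(R\tfrac{x}{|x|})$ against $\||x|^{\frac{p-Q}{p}}\mathcal{R}v\|_{L^p}$, which is the power-weight version, whereas here I have already made a logarithmic substitution; so I expect to need either a second change of variables of the type $s=\log\frac{R}{r}$ that turns the logarithmic weight into a power weight and then apply Lemma~\ref{CKN} in the $s$-variable, or equivalently a scaled version of Lemma~\ref{CKN} adapted to the measure $\frac{dr}{r|\log(R/r)|}$. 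Reversing the substitution $v=(\log\frac{R}{r})^{\frac{Q-1}{Q}}u$ then produces, after the dust settles, exactly the extremiser profile $f_{T,R}$ in \eqref{aremterm7}: the parameter $T$ enters precisely as the "radius" $e^{-1/T}$ at which one evaluates in the $s$-variable, and the extra factor $\bigl|T\log\frac{R}{|x|}\bigr|^{-Q}$ in \eqref{aremterm8} is what remains of the logarithmic denominator $|\log\frac{R}{r}|^{-p}$ from \eqref{ScalHardy4} after the change of variables. I expect the main obstacle to be bookkeeping: getting the chain of substitutions to land on the exact form \eqref{aremterm7}–\eqref{aremterm8}, in particular matching the power $\frac{Q-1}{Q}$ on $\log\frac{R}{|x|}$ inside $f_{T,R}$ and the triple product of logarithmic/weight factors in the denominator of $d_{cH}$, rather than any conceptual difficulty; all the analytic inputs (Lemma~\ref{ab_relation}(ii), Lemma~\ref{CKN}, polar decomposition \eqref{EQ:polar}) are already in place.
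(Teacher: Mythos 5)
Your proposal follows essentially the same route as the paper's proof: the substitution $v=(\log\tfrac{R}{r})^{-\frac{Q-1}{Q}}u$ combined with the change of variable $s=(\log\tfrac{R}{r})^{-1}$ (done in one step in the paper), Lemma~\ref{ab_relation}(ii) with the linear term killed by the vanishing boundary values of $v$, and then Lemma~\ref{CKN} applied to $v$ in the $s$-variable with $p=Q$, which is exactly how the parameter $T$ and the profile $f_{T,R}$ with radius $Re^{-1/T}$ arise. The proposal is correct.
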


\begin{proof}[Proof of Theorem \ref{stab_CH}] 
	Introducing polar coordinates $(r,y)=(|x|, \frac{x}{\mid x\mid})\in (0,\infty)\times\wp$ on $\mathbb{G}$, where $\wp$ is the sphere as in \eqref{EQ:sphere}, we have $u(x)=u(ry)\in C_0^{\infty}(B(0,R))$. In addition, let us set   
	\begin{equation}\label{5.4}
	v (sy): = \left( \log \frac{R}{r}\right)^{-\frac{Q-1}{Q}}u(ry),\; y \in \wp,
	\end{equation}
	where 
	\begin{equation*}
	s=s(r):=\left( \log \frac{R}{r}\right)^{-1}.
	\end{equation*}
	Since $u \in C_0^{\infty}(B(0,R))$ we have $v(0)=0$ and $v$ has a compact support. Moreover, it is straightforward that
	\begin{equation*}
	\frac{\partial }{\partial r} u(ry) = - \left(\frac{Q-1}{Q}\right)\left(\log \frac{R}{r}\right)^{-\frac{1}{Q}}\frac{v(sy)}{r} + \left(\log \frac{R}{r}\right)^{\frac{Q-1}{Q}}\frac{\partial}{\partial s} v(sy)s'(r).
	\end{equation*}
	A direct calculation gives  
	\begin{align*}\label{5.5}
	S: & = \int_{B(0,R)} \left| \mathcal{R} u \right|^Q dx - \left(\frac{Q-1}{Q}\right)^Q \int_{B(0,R)} \frac{|u|^Q}{|x|^Q \left(\log \frac{R}{|x|}\right)^Q} dx
	\\ & = \int_{\wp} \int_{0}^{R} \left|\frac{\partial }{\partial r}u(ry) \right|^Q r^{Q-1} - \left(\frac{Q-1}{Q}\right)^Q \frac{|u(ry)|^Q}{r\left(\log \frac{R}{r}\right)^Q} dr dy \\
	&= \int_{\wp} \int_{0}^{R} \left| \left(\frac{Q-1}{Q}\right)\left(r\log \frac{R}{r}\right)^{-\frac{1}{Q}}v(sy) 
	 + \left(r\log \frac{R}{r}\right)^{\frac{Q-1}{Q}}\frac{\partial}{\partial s} v(sy)s'(r) \right|^Q \\
	&- \left(\frac{Q-1}{Q}\right)^Q \frac{|v(sy)|^Q}{r\log \frac{R}{r}} dr dy.  	
	\end{align*}
	Now by applying the second relation in Lemma \ref{ab_relation} with the choice 
	\begin{equation*}
	a = \frac{Q-1}{Q} \left(r \log \frac{R}{r}\right)^{-\frac{1}{Q}}v(sy) \quad \text{and} \quad b = \left(r \log \frac{R}{r}\right)^{\frac{Q-1}{Q}} \frac{\partial}{\partial s} v(sy)s'(r),
	\end{equation*}
	and by using the facts $v(0)=0$ and $\underset{r\rightarrow \infty}{\lim}v(ry)=0$, we obtain 
	\begin{align*}
	S & \geq \int_{\wp} \int_{0}^{R} -Q \left(\frac{Q-1}{Q}\right)^{Q-1} |v(sy)|^{Q-2}v(sy) \frac{\partial }{\partial s}v(sy)s'(r) \nonumber \\
	& + C \left|\frac{\partial }{\partial s}v(sy) \right|^Q (s'(r))^Q \left(r \log \frac{R}{r}\right)^{Q-1} drdy \nonumber \\ 
	& =\int_{\wp} \int_{0}^{R} -Q \left(\frac{Q-1}{Q}\right)^{Q-1} |v(sy)|^{Q-2}v(sy) \frac{\partial }{\partial s}v(sy)s'(r) \nonumber \\
	& + C \left|\frac{\partial }{\partial s}v(sy) \right|^Q \frac{1}{r^{Q}\left( \log \frac{R}{r}\right)^{2Q}} \left(r \log \frac{R}{r}\right)^{Q-1} drdy \nonumber \\ 
	& =\int_{\wp} \int_{0}^{R} -Q \left(\frac{Q-1}{Q}\right)^{Q-1} |v(sy)|^{Q-2}v(sy) \frac{\partial }{\partial s}v(sy)s'(r) \nonumber \\
	& + C \left|\frac{\partial }{\partial s}v(sy) \right|^Q \frac{1}{\left( \log \frac{R}{r}\right)^{Q-1}} s'(r) drdy \nonumber \\ 
	& = \int_{\wp} \int_{0}^{R} -Q \left(\frac{Q-1}{Q}\right)^{Q-1} |v(sy)|^{Q-2}v(sy) \frac{\partial }{\partial s} v(s) 
\\	&  +C  \left|\frac{\partial }{\partial s}v(sy) \right|^Q s^{Q-1}dsdy \nonumber \\
	& = C \int_{\mathbb{G}} \left| \mathcal{R} v \right|^Q dx, \nonumber 
	\end{align*} 
	that is, 
	\begin{equation}\label{5.6}
	S \geq C \int_{\mathbb{G}} \left| \mathcal{R} v \right|^Q dx.
	\end{equation}
	
	According to Lemma \ref{CKN} with $v \in C^{\infty}_0(\mathbb{G}\backslash\{0\})$ with $p=Q$ and \eqref{5.6}, it implies that 
	\begin{align*}
	S &\geq C \int_{\mathbb{G}} \frac{|v(x) - v(T\frac{x}{|x|})|^Q}{|x|^Q|\log \frac{T}{|x|}|^Q} dx = C \int_{\wp} \int_{0}^{\infty} \frac{|v(sy)-v(Ty)|^Q}{s|\log \frac{T}{s}|^Q} ds dy \\
	& = C \int_{\wp} \int_{0}^{R} \frac{\left|\left(\log \frac{R}{r}\right)^{-\frac{Q-1}{Q}}u(ry) - T^{\frac{Q-1}{Q}} u(Re^{-\frac{1}{T}}y) \right|^Q}{r(\log \frac{R}{r})|\log (T\log\frac{R}{r} )|^Q} dr dy \\ 
	& = C \int_{\wp} \int_{0}^{R}  \frac{\left|u(ry) - T^{\frac{Q-1}{Q}} u(Re^{-\frac{1}{T}}y)(\log \frac{R}{r})^{\frac{Q-1}{Q}} \right|^Q}{r(\log \frac{R}{r})^Q|\log (T\log\frac{R}{r} )|^Q} dr dy.
	\end{align*}
	Thus, we arrive at   
	\begin{equation*}
	S \geq C \int_{B(0,R)} \frac{\left|u(x)-T^{\frac{Q-1}{Q}} u\left(Re^{-\frac{1}{T}}\frac{x}{|x|}\right)\left(\log \frac{R}{|x|}\right)^{\frac{Q-1}{Q}}\right|^Q}{|x|^Q\left|\log \frac{R}{|x|}\right|^Q \left|\log \left(T \log \frac{R}{|x|}\right)\right|^Q} dx
	\end{equation*}
	for all $T>0$. The proof is  complete.
\end{proof}
\section{Improved critical Hardy and Rellich inequalities for radial functions}

\begin{proposition}\label{radial_rem}
	Let $\mathbb{G}$ be a homogeneous group
	of homogeneous dimension $Q\geq2$. Let $|\cdot|$ be a homogeneous quasi-norm on $\mathbb{G}$. Let $q>0$ be such that
	\begin{equation}\label{1_1.5}
	\alpha = \alpha(q,L): = \frac{Q-1}{Q}q +L +2 \leq Q,
	\end{equation}
	for $-1 <L<Q-2$. 
	Then for all real-valued positive non-increasing radial functions $u\in C_0^{\infty}(B(0,R))$ we have 
	\begin{align}\label{1.6}
	\int_{B(0,R)} |\mathcal{R}  u|^Q dx &- \left( \frac{Q-1}{Q}\right)^Q \int_{B(0,R)} \frac{|u(x)|^Q}{|x|^Q \left(\log \frac{R e}{|x|}\right)^Q} dx \\
	& \geq |\wp|^{1-\frac{Q}{q}} C^{\frac{Q}{q}} \left(\int_{B(0,R)}\frac{|u(x)|^q}{|x|^Q \left(\log \frac{R e}{|x|}\right)^{\alpha}} dx \right)^{\frac{Q}{q}}, \nonumber
	\end{align}
	where  $|\wp| $ is the measure of the unit quasi-sphere in $\mathbb{G}$ and
	\begin{align*}
	C^{-1}=C(L,Q,q)^{-1} & := \int_{0}^{1} s^L \left(\log \frac{1}{s}\right)^{\frac{Q-1}{Q}q} ds \\ &= (L+1)^{-\left(\frac{Q-1}{Q}q+1\right)}\Gamma \left(\frac{Q-1}{Q}q +1\right)
	\end{align*}
	here $\Gamma (\cdot)$ is the Gamma function.
\end{proposition}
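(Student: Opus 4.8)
The plan is to follow the scheme used for Theorem~\ref{stab_CH}, with two changes: Lemma~\ref{CKN} is replaced by an elementary one-dimensional weighted estimate whose sharp constant is exactly $C(L,Q,q)$, and the hypothesis that $u$ is positive and non-increasing is exploited through part~(iii) of Lemma~\ref{ab_relation} (which carries the sharp constant $1$ on the ``gradient'' term) in place of part~(ii).

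First I would pass to polar coordinates $x=(r,y)\in(0,R)\times\wp$, write $u=u(r)$ (as $u$ is radial), set $\phi(r):=\log\frac{Re}{r}\ge1$, let $s:=\phi(r)^{-1}\in(0,1]$ (equivalently $r=Re\,e^{-1/s}$), and put
\[
v(s):=\Bigl(\log\tfrac{Re}{r}\Bigr)^{-\frac{Q-1}{Q}}u(r)=s^{\frac{Q-1}{Q}}u\bigl(Re\,e^{-1/s}\bigr).
\]
Since $u\in C_0^\infty(B(0,R))$, the function $v$ is smooth on $(0,1)$, extends continuously to $[0,1]$ with $v(0)=0$, and vanishes near $s=1$ (which corresponds to $r=R$). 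A differentiation gives $\frac{d}{dr}u(r)=-\frac{s^{1/Q}}{r}\bigl(\tfrac{Q-1}{Q}v(s)-s\,v'(s)\bigr)$, and using $\frac{dr}{r}=\frac{ds}{s^2}$ the left-hand side of \eqref{1.6} becomes
\[
S:=|\wp|\int_0^1\frac1s\Bigl(\bigl|\tfrac{Q-1}{Q}v(s)-s\,v'(s)\bigr|^Q-\bigl(\tfrac{Q-1}{Q}\bigr)^Q|v(s)|^Q\Bigr)\,ds.
\]
Next I would apply part~(iii) of Lemma~\ref{ab_relation} with $p=Q\ge2$, $a=\tfrac{Q-1}{Q}v(s)$ and $b=s\,v'(s)$: here $a\ge0$ because $u\ge0$, and $a-b=\tfrac{Q-1}{Q}v-s v'=-\tfrac{r}{s^{1/Q}}u'(r)\ge0$ because $u$ is non-increasing, so the hypotheses of part~(iii) are met. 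Integrating the resulting pointwise bound against $\frac{ds}{s}$, the cross term $\int_0^1 v^{Q-1}v'\,ds=\tfrac1Q\bigl[v^Q\bigr]_0^1$ vanishes, and we obtain $S\ge|\wp|\int_0^1 s^{Q-1}|v'(s)|^Q\,ds$.

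To finish, I would use that $v$ vanishes near $s=1$, so $v(s)=-\int_s^1 v'(\tau)\,d\tau$; Hölder's inequality with exponents $Q$ and $\tfrac{Q}{Q-1}$, after inserting $1=\tau^{\frac{Q-1}{Q}}\tau^{-\frac{Q-1}{Q}}$, gives $|v(s)|\le\bigl(\int_0^1|v'(\tau)|^Q\tau^{Q-1}\,d\tau\bigr)^{1/Q}\bigl(\log\tfrac1s\bigr)^{\frac{Q-1}{Q}}$. Raising to the power $q$, multiplying by $s^L$ and integrating over $(0,1)$ yields
\[
\int_0^1 s^L|v(s)|^q\,ds\le C(L,Q,q)^{-1}\Bigl(\int_0^1|v'(\tau)|^Q\tau^{Q-1}\,d\tau\Bigr)^{q/Q},
\]
where $C(L,Q,q)^{-1}=\int_0^1 s^L(\log\tfrac1s)^{\frac{Q-1}{Q}q}\,ds$, which the substitution $s=e^{-x}$ turns into $(L+1)^{-(\frac{Q-1}{Q}q+1)}\Gamma(\frac{Q-1}{Q}q+1)$ (finite since $L>-1$). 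Rearranging and combining with the bound from the previous paragraph gives $S\ge|\wp|\,C^{Q/q}\bigl(\int_0^1 s^L|v|^q\,ds\bigr)^{Q/q}$. Finally the change of variables from the first step, together with $\alpha=\tfrac{Q-1}{Q}q+L+2$, identifies $\int_0^1 s^L|v(s)|^q\,ds$ with $|\wp|^{-1}\int_{B(0,R)}\frac{|u(x)|^q}{|x|^Q(\log\frac{Re}{|x|})^{\alpha}}\,dx$, and substituting this back produces the factor $|\wp|^{1-Q/q}$ and exactly \eqref{1.6}.

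The main obstacle is not conceptual but bookkeeping: I must track the weights through the substitution $r\mapsto s$ so that all exponents match and the integral $\int_0^1 s^L(\log\frac1s)^{\frac{Q-1}{Q}q}\,ds$ is precisely the Gamma-function constant in the statement, and I must verify that the two conditions $a\ge0$, $a-b\ge0$ needed to apply Lemma~\ref{ab_relation}(iii) with optimal constant are exactly what ``$u$ positive and non-increasing'' encodes. The constraints $-1<L<Q-2$ and $\alpha\le Q$ serve only to ensure that the integrals in \eqref{1.6} and the constant $C(L,Q,q)$ are well defined and finite.
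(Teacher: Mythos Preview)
Your proposal is correct and follows essentially the same route as the paper: the same substitution $v(s)=(\log\frac{Re}{r})^{-\frac{Q-1}{Q}}u(r)$ with $s=(\log\frac{Re}{r})^{-1}$, the same use of Lemma~\ref{ab_relation}(iii) (exploiting $u\ge0$ and $u'\le0$) to reduce to $|\wp|\int_0^1 s^{Q-1}|v'(s)|^Q\,ds$, and the same H\"older argument to identify the constant $C(L,Q,q)^{-1}=\int_0^1 s^L(\log\tfrac1s)^{\frac{Q-1}{Q}q}\,ds$. Your choice of $a=\tfrac{Q-1}{Q}v$, $b=sv'$ differs from the paper's only by the positive factor $s^{1/Q}/r$, which is immaterial, and your explicit verification of the hypotheses $a\ge0$, $a-b\ge0$ is a welcome clarification.
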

\begin{proof}[Proof of Proposition \ref{radial_rem}]
	As in previous proofs we set  
	\begin{align}\label{2.2}
	&v(s) = \left(\log \frac{R e}{r}\right)^{-\frac{Q-1}{Q}} u(r), \quad \text{where} \quad r =|x|,s=s(r)=\left(\log \frac{R e}{r}\right)^{-1}, \\
	&s'(r) = \frac{s(r)}{r \log \frac{Re}{r}} \geq 0. \nonumber
	\end{align}
	Simply we have $v(0)=v(1)=0$ since $u(R)=0$, moreover, 
	\begin{multline}\label{2.3}
	u'(r) =-\left(\frac{Q-1}{Q}\right)\left(\log \frac{R e}{r}\right)^{-\frac{1}{Q}} \frac{v(s(r))}{r} \\+ \left(\log \frac{R e}{r}\right)^{\frac{Q-1}{Q}} v'(s(r))s'(r)\leq 0.
	\end{multline}
	It is straightforward that 
	\begin{align*}
&	I:= \int_{B(0,R)} |\mathcal{R}  u|^Q dx - \left(\frac{Q-1}{Q}\right)^Q \int_{B(0,R)} \frac{|u|^Q}{|x|^Q \left(\log \frac{R e}{|x|}\right)^Q}dx \\ 
	& = |\wp| \int_{0}^{R} |u'(r)|^Q r^{Q-1} dr - \left(\frac{Q-1}{Q}\right)^Q |\wp| \int_{0}^R \frac{|u(r)|^Q}{r \left(\log \frac{R e}{r}\right)^Q}dr
	\\
	& = |\wp| \int_{0}^R \left( \frac{Q-1}{Q} \left(\log \frac{R e}{r}\right)^{-\frac{1}{Q}} \frac{v(s(r))}{r} - \left(\log \frac{R e}{r}\right)^{\frac{Q-1}{Q}} v'(s(r))s'(r)\right)^Q r^{Q-1}dr
	\\
	& - \left(\frac{Q-1}{Q}\right)^Q |\wp| \int_{0}^R \frac{|u(r)|^Q}{r \left(\log \frac{R e}{r}\right)^Q}dr.	 
	\end{align*}
	By applying the third relation in Lemma \ref{ab_relation} with 
	\begin{equation*}
	a = \frac{Q-1}{Q} \left( \log \frac{Re}{r} \right)^{-\frac{1}{Q}} \frac{v(s(r))}{r} \quad \text{and} \quad b = \left( \log \frac{Re}{r} \right)^{\frac{Q-1}{Q}} v'(s(r))s'(r),
	\end{equation*}
	and	dropping $a^Q \geq 0$  as well as using the boundary conditions $v(0)=v(1)=0$, we get 
	\begin{align}\label{2.5}
	I &\geq -|\wp| Q\left(\frac{Q-1}{Q}\right)^{Q-1} \int_0^R v(s(r))^{Q-1}v'(s(r))s'(r)dr \\ 
	& + |\wp| \int_{0}^{R} |v'(s(r))|^Q (s'(r))^Q \left(r \log \frac{Re}{r} \right)^{Q-1} dr\nonumber \\
	& =-|\wp| Q\left(\frac{Q-1}{Q}\right)^{Q-1} \int_0^R v(s(r))^{Q-1}v'(s(r))s'(r)dr \\ 
	& + |\wp| \int_{0}^{R} |v'(s(r))|^Q \frac{1}{r^{Q}\left(\log \frac{Re}{r} \right)^{2Q}} \left(r \log \frac{Re}{r} \right)^{Q-1} dr\nonumber \\
	&=-|\wp| Q\left(\frac{Q-1}{Q}\right)^{Q-1} \int_0^R v(s(r))^{Q-1}v'(s(r))s'(r)dr \\ 
	& + |\wp| \int_{0}^{R} |v'(s(r))|^Q s(r)^{Q-1}s'(r) dr\nonumber \\
	&= - |\wp| Q \left(\frac{Q-1}{Q}\right)^{Q-1}  \int_{0}^{1} v(s)^{Q-1} v'(s)ds 
	\\ & + |\wp| \int_{0}^{1} |v'(s)|^Qs^{Q-1}ds \nonumber\\
	& =|\wp| \int_{0}^{1} |v'(s)|^Q s^{Q-1}ds.\nonumber
	\end{align}
	Moreover, by using the inequality  
	\begin{align*}
	|v(s)|= \left| \int_{s}^{1} v'(t)dt \right| & = \left| \int_{s}^{1} v'(t) t^{\frac{Q-1}{Q} -\frac{Q-1}{Q}} dt\right| \\ & \leq \left(\int_{0}^{1} |v'(t)|^Qt^{Q-1}dt\right)^{\frac{1}{Q}} \left(\log \frac{1}{s}\right)^{\frac{Q-1}{Q}},
	\end{align*}
	we obtain
	\begin{equation*}
	\int_{0}^{1} |v(s)|^q s^Lds \leq \left(\int_{0}^{1} |v'(s)|^Q s^{Q-1} ds \right)^{\frac{q}{Q}} \int_{0}^{1} s^L \left(\log \frac{1}{s}\right)^{\frac{Q-1}{Q}q} ds
	\end{equation*}
	for $-1 <L<Q-2$.
	Thus, we have
	\begin{equation}\label{2.6}
	\int_{0}^{1} |v'(s)|^Q s^{Q-1} ds \geq C^{\frac{q}{Q}} \left(\int_{0}^{1} |v(s)|^q s^L ds\right)^{\frac{Q}{q}}.
	\end{equation}
	Now it follows from \eqref{2.5} and \eqref{2.6} that 
	\begin{align*}
	I &\geq |\wp| C^{\frac{Q}{q}}\left(\int_{0}^{1} |v(s)|^q s^L ds\right)^{\frac{Q}{q}} = |\wp|C^{\frac{Q}{q}} \left( \int_{0}^{R} \frac{|u(r)|^q}{r\left(\log \frac{Re}{r}\right)^{\alpha}}dr\right)^{\frac{Q}{q}}  \\
	& = |\wp|^{1-\frac{Q}{q}} C^{\frac{Q}{q}} \left( \int_{0}^{R} \frac{|u(x)|^q}{|x|^Q\left(\log \frac{Re}{|x|}\right)^{\alpha}}dx\right)^{\frac{Q}{q}}. 
	\end{align*}
	where $\alpha = \alpha(q,L)=\frac{Q-1}{Q}q+L+2$. The proof is complete. 
\end{proof}	

The method used in the previous section also allows one to obtain the following stability inequality for Rellich type inequalities:

\begin{proposition}\label{Rellichtype}
	Let $\mathbb{G}$ be a homogeneous group
	of homogeneous dimension $Q$. Let $|\cdot|$ be a homogeneous quasi-norm on $\mathbb{G}$ and $p\geq 1$. 
	Let $k \geq 2, k \in \mathbb{N}$ be such that $kp<Q$.
	Then for all real-valued radial functions $u\in C_0^{\infty}(\mathbb{G})$ we have 
	\begin{multline}
	\int_{\mathbb{G}} \frac{|\tilde{\mathcal{R}} u|^p}{|x|^{(k-2)p}} dx - K^p_{k,p} \int_{\mathbb{G}} \frac{|u|^p}{|x|^{kp}}dx \\ \geq C \sup_{R>0} 	\int_{\mathbb{G}}\frac{\left| |u(x)|^{\frac{p-2}{2}}u(x) -R^{\frac{Q-kp}{2}}|u(R)|^{\frac{p-2}{2}}u(R)|x|^{-\frac{Q-kp}{2}} \right|^2}{|x|^{kp}\left|\log \frac{R}{|x|}\right|^2} dx, 
	\end{multline}
	where  
	\begin{equation*}
	\tilde{\mathcal{R}}f = \mathcal{R}^{2}f + \frac{Q -1}{|x|} \mathcal{R}f
	\end{equation*}
	is the Rellich type operator on $\mathbb{G}$  and $K_{k,p} = \frac{(Q-kp)[(k-2)p+(p-1)Q]}{p^2}$.
\end{proposition}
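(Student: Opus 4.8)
The plan is the following. As $u$ is radial, I first invoke the polar decomposition \eqref{EQ:polar} to turn the two integrals on the left-hand side into one-dimensional integrals against $|\wp|\,r^{Q-1}\,dr$, writing $u=u(r)$, $\mathcal{R}u=u'$, $\tilde{\mathcal{R}}u=u''+\frac{Q-1}{r}u'$. The algebraic heart of the proof is the substitution
\[
v(r):=r^{\alpha}u(r),\qquad \alpha:=\frac{Q-kp}{p}>0,
\]
which is admissible since $kp<Q$; then $v(0)=0$, $v$ has compact support, and $v(r),\,rv'(r)\to0$ as $r\to0^{+}$. A direct computation yields
\[
\tilde{\mathcal{R}}u=r^{-\alpha-2}\Bigl(r^{2}v''+(Q-1-2\alpha)rv'+\alpha(\alpha+2-Q)v\Bigr),
\]
and the crucial point is the identity $\alpha(\alpha+2-Q)=-\frac{(Q-kp)\left[(k-2)p+(p-1)Q\right]}{p^{2}}=-K_{k,p}$. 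Plugging this in, one checks that all powers of $r$ cancel simultaneously in both integrals, so that after the change of variable $t=\log r$ (for which $rv'=\dot v$, $r^{2}v''=\ddot v-\dot v$, and $\int_{0}^{\infty}r^{-1}\,dr$ becomes $\int_{\mathbb{R}}dt$),
\[
\int_{\mathbb{G}}\frac{|\tilde{\mathcal{R}}u|^{p}}{|x|^{(k-2)p}}dx-K^{p}_{k,p}\int_{\mathbb{G}}\frac{|u|^{p}}{|x|^{kp}}dx=|\wp|\int_{\mathbb{R}}\Bigl(\bigl|\ddot v+\nu\dot v-K_{k,p}v\bigr|^{p}-K_{k,p}^{p}|v|^{p}\Bigr)dt,
\]
where $\nu:=Q-2-2\alpha$ (its precise value plays no role below).

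Next I apply the second relation of Lemma \ref{ab_relation} pointwise with $a=-K_{k,p}v$ and $b=-(\ddot v+\nu\dot v)$ --- this is the only place $p\geq2$ is used --- and integrate in $t$. The cross term is evaluated by integration by parts: $\int_{\mathbb{R}}|v|^{p-2}v\,\dot v\,dt=0$ and $\int_{\mathbb{R}}|v|^{p-2}v\,\ddot v\,dt=-(p-1)\int_{\mathbb{R}}|v|^{p-2}\dot v^{2}\,dt$, all boundary terms vanishing thanks to the decay of $v$ and $\dot v$ at $t=\pm\infty$. Since $K_{k,p}>0$ and the remaining term $C\int_{\mathbb{R}}|\ddot v+\nu\dot v|^{p}\,dt$ is nonnegative, this gives
\[
\int_{\mathbb{G}}\frac{|\tilde{\mathcal{R}}u|^{p}}{|x|^{(k-2)p}}dx-K^{p}_{k,p}\int_{\mathbb{G}}\frac{|u|^{p}}{|x|^{kp}}dx\;\geq\;p(p-1)K_{k,p}^{p-1}\,|\wp|\int_{\mathbb{R}}|v|^{p-2}\dot v^{2}\,dt.
\]

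To recognise the right-hand side as the claimed remainder I then set $w:=|v|^{\frac{p-2}{2}}v$, which satisfies $w=|x|^{\frac{Q-kp}{2}}|u|^{\frac{p-2}{2}}u$ and $w'=\frac{p}{2}|v|^{\frac{p-2}{2}}v'$, so that $\int_{\mathbb{R}}|v|^{p-2}\dot v^{2}\,dt=\frac{4}{p^{2}|\wp|}\int_{\mathbb{G}}|x|^{2-Q}|\mathcal{R}w|^{2}\,dx$. Now Lemma \ref{CKN} with exponent $2$, applied to $w$, gives $\int_{\mathbb{G}}|x|^{2-Q}|\mathcal{R}w|^{2}\,dx\geq\frac14\int_{\mathbb{G}}\frac{|w-w_{R}|^{2}}{|x|^{Q}|\log\frac{R}{|x|}|^{2}}\,dx$; since $u$ (hence $w$) is radial we have $w_{R}(x)=R^{\frac{Q-kp}{2}}|u(R)|^{\frac{p-2}{2}}u(R)$, and pulling the factor $|x|^{\frac{Q-kp}{2}}$ out of $w-w_{R}$ turns $\frac{|w-w_{R}|^{2}}{|x|^{Q}}$ precisely into $|x|^{-kp}\bigl||u(x)|^{\frac{p-2}{2}}u(x)-R^{\frac{Q-kp}{2}}|u(R)|^{\frac{p-2}{2}}u(R)|x|^{-\frac{Q-kp}{2}}\bigr|^{2}$. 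Combining the three displays and taking $\sup_{R>0}$ gives the statement with $C=\frac{(p-1)K_{k,p}^{p-1}}{p}$.

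Two points need care. The main one is the reduction identity: checking that $\alpha(\alpha+2-Q)=-K_{k,p}$ and that all powers of $r$ cancel simultaneously in both integrals --- this is exactly what forces the constant $K_{k,p}$ and is the crux of the argument. The other is the applicability of Lemma \ref{CKN} to $w=|v|^{\frac{p-2}{2}}v$, which for $p\geq2$ is merely Lipschitz, not in $C_{0}^{\infty}(\mathbb{G}\backslash\{0\})$, and which decays like $|x|^{(Q-kp)/2}$ near the origin rather than vanishing there; this is dealt with by the usual cut-off and density argument, which shows that the inequality of Lemma \ref{CKN} extends to such $w$.
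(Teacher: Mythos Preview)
Your argument is essentially the paper's: the same substitution $v=r^{(Q-kp)/p}u$, the same use of Lemma~\ref{ab_relation} to extract the cross term, the same integration by parts yielding $\int |v|^{p-2}(v')^2\,r\,dr$, and the same passage to $w=|v|^{(p-2)/2}v$ followed by Lemma~\ref{CKN} with exponent~$2$. The cosmetic differences are the change of variable $t=\log r$ (the paper stays in $r$) and your direct application of Lemma~\ref{CKN} on $\mathbb{G}$ with $p=2$, whereas the paper introduces an auxiliary homogeneous group $\mathbb{G}_2$ of homogeneous dimension~$2$ and applies the lemma there with $p=Q=2$; your route is arguably cleaner.

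One small point: you invoke part~(ii) of Lemma~\ref{ab_relation}, which forces $p\ge 2$, but then discard the extra $C|b|^p$ term anyway. The paper uses part~(i), valid for all $p\ge 1$ as stated in the proposition, and this is all that is needed; you should do the same so that your proof covers the full range of the hypothesis.
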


\begin{proof}[Proof of Proposition \ref{Rellichtype}]
	For $k \geq 2, k \in \mathbb{N}$ and $kp<Q$ let us set
	\begin{equation}\label{4.2}
	v(r): = r^{\frac{Q-kp}{p}}u(r), \quad \text{where} \quad r \in [0, \infty). 
	\end{equation}
	Thus, $v(0)=0$ and $v(\infty) = 0$. 
	
	We have 
	\begin{align*}
	-\tilde{\mathcal{R}} u & =- \mathcal{R}^{2}\left(r^{\frac{kp-Q}{p}}v(r)\right) - \frac{Q-1}{r} \mathcal{R}\left(r^{\frac{kp-Q}{p}}v(r)\right)
	\\ &=- \mathcal{R}\left( \frac{kp-Q}{p}r^{\frac{kp-Q}{p}-1}v(r)
	+r^{\frac{kp-Q}{p}}\mathcal{R}v(r)\right)
	\\ &- \frac{Q -1}{r}\frac{kp-Q}{p}r^{\frac{kp-Q}{p}-1}v(r)
	-\frac{Q -1}{r}r^{\frac{kp-Q}{p}}\mathcal{R}v(r)
	\\ & = -\frac{kp-Q}{p}\left(\frac{kp-Q}{p}-1\right) r^{\frac{kp-Q}{p}-2}v(r)
	-\frac{kp-Q}{p}r^{\frac{kp-Q}{p}-1}\mathcal{R}v(r)
	\\ &- \frac{kp-Q}{p}r^{\frac{kp-Q}{p}-1}\mathcal{R}v(r)
	-r^{\frac{kp-Q}{p}}\mathcal{R}^{2}v(r)
	\\ &- \frac{Q -1}{r}\frac{kp-Q}{p}r^{\frac{kp-Q}{p}-1}v(r)
	-\frac{Q -1}{r}r^{\frac{kp-Q}{p}}\mathcal{R}v(r)
	\\ &=-r^{\frac{kp-Q}{p}-2}\left(\frac{(kp-Q)(kp-Q-p)}{p^{2}}+\frac{(Q-1)(kp-Q)}{p} \right)v(r)
	\\ &-r^{\frac{kp-Q}{p}-2}r^{2}\left(\mathcal{R}^{2}v(r)+\frac{1}{r}\left( \frac{2(kp-Q)}{p}+(Q-1)\right) \mathcal{R} v(r) \right)
	\\ & =  r^{k-2-\frac{Q}{p}}(K_{k,p}v(r)-r^2 \tilde{\mathcal{R}}_{k} v(r)),
	\end{align*}
	where  
	\begin{equation*}
	\tilde{\mathcal{R}}_{k}f = \mathcal{R}^{2}f + \frac{2k + \frac{Q(p-2)}{p} -1}{r} \mathcal{R}f
	\end{equation*}
	and $K_{k,p} = \frac{(Q-kp)[(k-2)p+(p-1)Q]}{p^2}$.
	By using the first inequality in Lemma \ref{ab_relation} with $a=K_{k,p}v(r)$ and $b = r^2 \tilde{\mathcal{R}}_{k} v(r)$, and the fact $\int_{0}^{\infty}|v|^{p-2}vv'dr = 0$ since $v(0)=0$ and $v(\infty) = 0$, we obtain
	\begin{align*}
	J &:= \int_{\mathbb{G}} \frac{|\tilde{\mathcal{R}} u|^p}{|x|^{(k-2)p}} dx - K^p_{k,p} \int_{\mathbb{G}} \frac{|u|^p}{|x|^{kp}}dx \\
	&= |\wp|\int_{0}^{\infty} |-\tilde{\mathcal{R}} u(r)|^p r^{Q-1-(k-2)p} dr - K^p_{k,p} |\wp|\int_{0}^{\infty} |u(r)|^p r^{Q-kp-1}dr \\
	&= |\wp|\int_{0}^{\infty} \left( |K_{k,p} v(r)-r^2\tilde{\mathcal{R}}_{k} v(r)|^p - (K_{k,p}v(r))^p \right)r^{-1}dr \\
	& \geq - p |\wp|K^{p-1}_{k,p} \int_{0}^{\infty} |v|^{p-2} v \tilde{\mathcal{R}}_{k} v r dr\\
	& = - p |\wp|K^{p-1}_{k,p} \int_{0}^{\infty} |v|^{p-2} v \left(v'' +\frac{2k + \frac{Q(p-2)}{p}-1}{r}v' \right)rdr \\
	& =- p |\wp|K^{p-1}_{k,p} \int_{0}^{\infty} |v|^{p-2} vv''rdr.
	\end{align*}
	On the other hand, we have
	\begin{align*}
	-\int_{0}^{\infty} |v|^{p-2} vv''rdr &= (p-1) \int_{0}^{\infty} |v|^{p-2} (v')^2 r dr + \int_{0}^{\infty} |v|^{p-2} vv'dr \\
	& = (p-1) \int_{0}^{\infty} |v|^{p-2} (v')^2 r dr
	\\ &= \frac{4(p-1)}{p^2} \int_{0}^{\infty} \left( 
	\frac{p-2}{2}\right)^{2} |v|^{p-2}(v')^2dr
	\\ &	+\frac{4(p-1)}{p^2} \int_{0}^{\infty}(p-2) |v|^{p-2}(v')^2+|v|^{p-2}(v')^2 r dr
	\\&= \frac{4(p-1)}{p^2} \int_{0}^{\infty} \left(\left(|v|^{\frac{p-2}{2}}\right)'v+|v|^{\frac{p-2}{2}}v'\right)^2 r dr \\
	&= \frac{4(p-1)}{p^2} \int_{0}^{\infty} |(|v|^{\frac{p-2}{2}}v)'|^2 r dr \\
	&= \frac{4(p-1)}{|\wp_{2}| p^2} \int_{\mathbb{G}_{2}} \left|  \mathcal{R}(|v|^{\frac{p-2}{2}}v) \right|^2 dx,
	\end{align*}
	where $\mathbb{G}_{2}$ is a homogeneous group of homogeneous degree $2$ and
	$|\wp_{2}|$ is the measure of the corresponding unit $2$-quasi-ball. 
	By using Lemma \ref{CKN} for $|v|^{\frac{p-2}{2}}v \in C^{\infty}_{0} (\mathbb{G}_{2} \backslash \{0\})$ in $p=Q=2$ case, and combining above equalities, we obtain 
	\begin{align*}
	J &\geq C_{1} \int_{\mathbb{G}_{2}} \frac{\left| |v(x)|^{\frac{p-2}{2}} v(x) - |v(R\frac{x}{|x|})|^{\frac{p-2}{2}} v(R\frac{x}{|x|} )\right|^2}{|x|^2 \left|\log \frac{R}{|x|}\right|^2} dx  \\
	& = C_{1} \int_{0}^{\infty} \frac{\left| |v(r)|^{\frac{p-2}{2}} v(r) - |v(R)|^{\frac{p-2}{2}} v(R) \right|^2}{r \left|\log \frac{R}{r}\right|^2} dr \\
	& = C_{1} \int_{0}^{\infty} \frac{\left| |u(r)|^{\frac{p-2}{2}} u(r) - R^{\frac{Q-kp}{2}}|u(R)|^{\frac{p-2}{2}} u(R) r^{-\frac{Q-kp}{2}} \right|^2}{r^{1-Q+kp} \left|\log \frac{R}{r}\right|^2} dr
	\end{align*} 
	for any $R>0$. That is, 
	\begin{align*}
	J \geq C \sup_{R>0} \int_{\mathbb{G}} \frac{\left| |u(x)|^{\frac{p-2}{2}}u(x) -R^{\frac{Q-kp}{2}}|u(R)|^{\frac{p-2}{2}}u(R)|x|^{-\frac{Q-kp}{2}} \right|^2}{|x|^{kp}\left|\log \frac{R}{|x|}\right|^2} dx. 
	\end{align*}
	The proof is complete.
\end{proof}

{\bf Acknowledgments.} The authors were supported in parts by the EPSRC
grant EP/K039407/1 and by the Leverhulme Grant RPG-2014-02,
as well as by the MESRK grant 5127/GF4. No new data was collected or
generated during the course of research.

\bibliographystyle{amsplain}

\end{document}